\documentclass{article}
\usepackage{amsmath}
\usepackage{amsthm}
\usepackage{thmtools}
\declaretheoremstyle[headfont=\normalfont]{normalhead}
\usepackage{color}
\usepackage{graphicx}
\usepackage{morefloats}
\usepackage{ amssymb }
\usepackage{textcomp}
\usepackage{url}
\usepackage{float}
\usepackage{hyperref}
\newtheoremstyle{mydef}
{\topsep}{\topsep}%
{}{}%
{\itshape}{}
{\newline}
{%
  \rule{\textwidth}{0.0pt}\\*%
  \thmname{#1}~\thmnumber{#2}\thmnote{\-\ #3}.\\*[-1.5ex]%
  \rule{\textwidth}{0.0pt}}%
\hypersetup{
   colorlinks=true,
    linkcolor=blue,
    filecolor=magenta,      
    citecolor=blue,
    urlcolor = blue
}
\begin{document}
\theoremstyle{mydef}
\newtheorem{conjecture}{Conjecture}
\newtheorem{theorem}{Theorem}
\newtheorem{question}{Question}
\newtheorem{remark}{Remark}
\newtheorem{proposal}{Proposal}
\newtheorem{lemma}{Lemma}
\newtheorem{corollary}{Corollary}

\author{Barry Brent}

\date{26 September 2026}

\title{Dynamics of multiples of zeta by exponential functions}
\maketitle
\begin{abstract}
\hskip -.2in
We study the family
of functions
$V_z:s \mapsto  \zeta(s) \exp (zs)$
with numerical experiments. 
For selected sequences $z_n=x_nu$ on rays, our experiments suggest
that suitable fixed points of $V_{z_n}$ approach nontrivial zeros of
$\zeta$ in a nearly logarithmic pattern. We formulate this behavior as
conjectures with explicit restrictions on the ray. We prove a conditional
result identifying finite limits as zeros, and give sufficient hypotheses
for the convergence of refinement diagnostics.
Substantive AI assistance in the preparation of this draft is disclosed
in Section~\ref{sec:ai-disclosure}.
\end{abstract}

\bibliographystyle{plain}

\section{\sc introduction}
In this article, 
we study the family
of functions
$V_z:s \mapsto  \zeta(s) \exp (zs)$
with numerical experiments. 
For selected sequences $z_n=x_nu$ on rays, our experiments suggest
that suitable fixed points of $V_{z_n}$ approach nontrivial zeros of
$\zeta$ in a nearly logarithmic pattern. We formulate this behavior as
conjectures with explicit restrictions on the ray. We prove a conditional
result identifying finite limits as zeros, and give sufficient hypotheses
for the convergence of refinement diagnostics.
\newline \newline
We formulated  our conjectures
after  computer experiments using 
\it Mathematica. \rm Data and 
\it Mathematica \rm notebooks
for this project are at
ResearchGate, here \cite{Br3}.
Several other writers
have considered the dynamics of 
the Riemann zeta function,
 for example,
Kawahira \cite{Ka} and Woon \cite{W}.
\subsection{Definitions.}
We fix the following notation
for the remainder of the article: 
\newline \newline
Objects denoted $z$ and $s$ are generic 
complex numbers.
\newline
$\mathbb{C}^{\it cut}:=\mathbb{C}\setminus(-\infty,0)$; in particular,
$0$ is included, and continuity at $0$ refers to this subspace.
\newline 
$\rho$ is a 
nontrivial Riemann zero.
\newline $V_z(s):=\zeta(s) \exp (zs)$.
Thus $V_0(s) = \zeta(s)$.
\newline
$\Phi_z$ is the set of fixed points of $V_z$. Thus $\Phi_0$ is the set of fixed points of zeta.
\newline $\psi$ 
is a fixed point of zeta.
\newline $\psi_{\rho}$ is
a nearest fixed point of zeta to $\rho$, whenever such a point exists.
Conjecture 1 asserts existence and uniqueness.
\newline $u$ is
a point on the
unit circle.
\newline
$\vec{R}_u$ is the ray
emanating from zero and passing through $u$.
\newline
$X=(x_0, x_1,... )$
is an infinite increasing
arithmetic  progression with 
$x_0 = 0$.
\newline 
$\vec{\Phi}_{X,u}:=
\Phi_{0} \times \Phi_{x_{_1} u} 
\times \Phi_{x_{_2} u} \times ...$ .
\newline
$\vec{\phi}$
is a member of
$\vec{\Phi}_{X,u}$.
We write $\vec{\phi} = 
(\phi_0, \phi_1, ...)$.
\newline \newline
Let $\vec z=(z_0,z_1,\ldots)$ converge to $\gamma$, with
$z_n\ne\gamma$. An argument lift is a real sequence $\vartheta_n$ such
that $e^{i\vartheta_n}=(z_n-\gamma)/|z_n-\gamma|$.
Our default \emph{discrete} convention chooses
$\theta_{\vec z}(z_0)=\operatorname{Arg}(z_0-\gamma)$ and, recursively,
chooses $\theta_{\vec z}(z_{n+1})$ to be the smallest representative
strictly greater than $\theta_{\vec z}(z_n)$.
This determines a unique lift once the initial argument is fixed.
It need not recover the continuous angle of an interpolating curve:
a step may omit complete turns, and clockwise motion can be represented
by positive increments. A different lift will always be specified.

Fix an integer $K\geq1$, independently of $h\geq0$, and put
$r_n=|z_n-\gamma|$. For a specified lift with at least two distinct
angles in each window, define $(m_{h,K},b_{h,K})$ as the unique minimizer
of
\[
 \sum_{n=h}^{h+K}\bigl(\log r_n-m\vartheta_n-b\bigr)^2,
 \qquad (m,b)\in\mathbb R^2.
\]
For all sufficiently large $h$, $0<r_n<1$ throughout the window, so
\[
 d_{h,K}(n)=
 \frac{|m_{h,K}\vartheta_n+b_{h,K}-\log r_n|}{|\log r_n|},
 \qquad d_{h,K}=d_{h,K}(h+K)
\]
is defined. We call the sequence \emph{nearly logarithmic with respect
to $\gamma$ and the specified lift} if, for every fixed $K\geq1$,
there are $C_K>0$, $0<q_K<1$, and $h_K$ such that
$d_{h,K}\leq C_Kq_K^h$ for $h\geq h_K$.
Constants may depend on the sequence and on $K$.
Without an explicit alternative, the discrete convention is intended.
These are exact definitions; a finite-precision least-squares calculation
is only a numerical approximation to them.

For the same lift, set
\[
 \delta_n=|\vartheta_{n+1}-\vartheta_n|,
 \qquad \Delta_n=|\delta_{n+1}-\delta_n|.
\]
We call the sequence \emph{nearly uniformly distributed} if
$\Delta_n\leq Cq^n$ for some $C>0$, $0<q<1$, and all sufficiently
large $n$. This terminology concerns angular increments, not uniform
sampling with respect to arc length. In particular, a zero limiting
increment is not excluded by this definition.

For geometric spirals, we use a continuous angle rather than the
minimal discrete lift. A spiral with center $\gamma$ is the image of a
continuous injective curve $c:[0,\infty)\to\mathbb C\setminus\{\gamma\}$
with $c(t)\to\gamma$, whose radius decreases strictly and whose
continuous argument lift $\Theta(t)$ is strictly monotone and unbounded
in absolute value on a tail. Either direction of rotation is allowed.
Call such a spiral \emph{nearly logarithmic} if each inward sample
$z_n=c(t_n)$, $t_n\to\infty$, with
\[
 0<a\leq |\Theta(t_{n+1})-\Theta(t_n)|\leq b<2\pi
\]
on a tail, is nearly logarithmic using the inherited lift
$\vartheta_n=\Theta(t_n)$. Here $a,b$ may depend on the sample.
This sampling restriction makes the rate in the sample index meaningful;
the inherited lift prevents loss of turns.

An \emph{exactly logarithmic spiral} satisfies
\[
 \log|c(t)-\gamma|=m\Theta(t)+b,\qquad m\ne0,
\]
with $m\Theta(t)\to-\infty$ inward. Its least-squares residuals vanish
for every admissible sample, so it is nearly logarithmic in the preceding
sense. A discrete fit alone does not establish that a geometric spiral
with these properties exists. In \cite{Br2}, we studied deviations of
certain numerical spirals from fitted logarithmic spirals; here the
continuous and discrete assertions are kept separate.

\subsection{A theorem on limits 
of sequences from
\texorpdfstring{$\Phi_{X, u}$}
{TEXT}.} 
Before we list the conjectures,
we prove a theorem.
\begin{theorem}\label{thm:limits}
Let $|u|=1$ and let $X=(x_n)_{n\geq0}$ be an increasing arithmetic
progression of real numbers with $x_0=0$ and positive common difference.
Suppose $\phi_n\in\Phi_{x_nu}$, $\phi_n\to\lambda\in\mathbb C$, and
$\Re(u\lambda)>0$. Then $\zeta(\lambda)=0$. If in addition
$0<\Re\lambda<1$, this is a nontrivial zero.
\end{theorem}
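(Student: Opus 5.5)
The plan is to pass to the limit in the fixed-point equation. Since $\phi_n\in\Phi_{x_nu}$ we have $\zeta(\phi_n)e^{x_nu\phi_n}=\phi_n$. We will rewrite this as
\[
 \zeta(\phi_n)=\phi_n\exp(-x_nu\phi_n),
\]
and show that the right-hand side tends to $0$. Then we use continuity of $\zeta$ at $\lambda$ to conclude $\zeta(\lambda)=0$. Before doing so we must check that $\lambda\neq 1$, so that $\zeta$ is continuous (indeed holomorphic) near $\lambda$. We also need $\phi_n\neq1$, which is automatic because $V_{x_nu}$ is only defined off the pole.

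\textbf{Step 1: decay of the exponential factor.} Since $\phi_n\to\lambda$ and $\Re(u\lambda)>0$, put $\epsilon=\Re(u\lambda)/2>0$. Then $\Re(u\phi_n)\geq\epsilon$ for all $n\geq N$. The progression has $x_n=nd$ with $d>0$, so $x_n\to+\infty$. Hence
\[
 |\exp(-x_nu\phi_n)|=\exp(-x_n\Re(u\phi_n))\leq e^{-x_n\epsilon}\to0 .
\]
The factor $\phi_n$ is bounded because it converges, so $\zeta(\phi_n)\to0$.

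\textbf{Step 2: excluding $\lambda=1$ and passing to the limit.} If $\lambda=1$, then $|\zeta(\phi_n)|\to\infty$ because of the pole, which contradicts $\zeta(\phi_n)\to0$. So $\lambda\neq1$, $\zeta$ is continuous at $\lambda$, and $\zeta(\lambda)=\lim\zeta(\phi_n)=0$.

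\textbf{Step 3: identifying the zero.} The zeros of $\zeta$ are the trivial zeros $-2,-4,\ldots$ together with the zeros in the critical strip. Under the extra hypothesis $0<\Re\lambda<1$, the zero $\lambda$ is not a trivial zero, so it is a nontrivial zero $\rho$.

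The only delicate point, and the step I would double-check, is the handling of the pole at $s=1$ in Step 2. The exponential estimate itself is routine once the uniform lower bound on $\Re(u\phi_n)$ is secured from convergence.
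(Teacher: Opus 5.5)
Your proposal is correct and follows essentially the same route as the paper: rewrite the fixed-point equation as $\zeta(\phi_n)=\phi_n e^{-x_nu\phi_n}$ and use the eventual bound $\Re(u\phi_n)\geq c>0$ together with $x_n\to\infty$ to get $\zeta(\phi_n)\to0$. Then, as in the paper, exclude $\lambda=1$ because of the pole and conclude $\zeta(\lambda)=0$ by continuity.
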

\begin{proof}
The fixed-point equation is
$\phi_n=\zeta(\phi_n)e^{x_nu\phi_n}$, hence
\[
 |\zeta(\phi_n)|=|\phi_n|e^{-x_n\Re(u\phi_n)}.
\]
Convergence makes $|\phi_n|$ bounded and gives
$\Re(u\phi_n)\geq c>0$ for all sufficiently large $n$.
Since $x_n\to\infty$, the right-hand side tends to zero.
The limit cannot be $1$, since $\zeta$ has a pole there and its
modulus tends to infinity on approach to $1$.
Thus $\zeta$ is continuous at $\lambda$ and $\zeta(\lambda)=0$.
\end{proof}

\subsection{Conjectures.}
The ray conjectures below concern \emph{admissible pairs} $(u,\rho)$:
\[
 |u|=1,\quad u\ne-1,\quad
 \bigl(u=1\ \hbox{or}\ \Im\rho\,\Im u<0\bigr),\quad
 \Re(u\rho)>0,\quad \Im(u\rho)\ne0.
\]
The sign condition records the range originally considered; it is not
asserted to be necessary. The positive-real-part condition is essential
to this formulation: if $\phi_n\to\rho$, then
$|\zeta(\phi_n)|=|\phi_n|e^{-x_n\Re(u\phi_n)}$ rules out
$\Re(u\rho)<0$, because $\rho\ne0$. The boundary
$\Re(u\rho)=0$ is not addressed. The last restriction excludes
nonrotating leading behavior from the spiral conjectures.
The earlier sign condition alone is insufficient: for
$\rho=\beta+i\gamma$ with $\beta,\gamma>0$ and
$u=(-1-i\varepsilon)/\sqrt{1+\varepsilon^2}$,
$0<\varepsilon<\beta/\gamma$, it holds but $\Re(u\rho)<0$.
Conjecture 1 has no ray restriction. All later references to a ray
conjecture or proposal use admissible pairs.

\begin{conjecture} \rm 
Let $\Phi_z$ be the set of fixed points of $V_z$.
\newline
(1) The  set of imaginary parts 
$\{\Im \phi: \phi \in \Phi_{z} \}$ 
is unbounded
and nonempty.
\newline 
(2) Given a Riemann zero
$\rho$, there is a unique zeta fixed point
$\psi_{\rho}$ such that,
given any zeta fixed point $\psi_1\ne\psi_{\rho}$,
$|\psi_{\rho} - \rho| <|\psi_1 - \rho|$.
\end{conjecture}
\begin{conjecture}\label{conj:sequences}\rm
For every admissible pair $(u,\rho)$ and every progression
$X=(0,d,2d,\ldots)$, $d>0$, there is a sequence
$\vec\phi\in\vec\Phi_{X,u}$ converging to $\rho$ that is nearly
logarithmic and nearly uniformly distributed with respect to $\rho$,
using the discrete argument convention.
\end{conjecture}
This is an existence assertion for suitably selected fixed points.
The numerical evidence does not justify the stronger claim that every
possible choice converging to $\rho$ has these properties.
\begin{conjecture}\label{conj:curve}\rm
For each nontrivial zero $\rho$ there is a continuous function
$f^{cut}:\mathbb C^{cut}\to\mathbb C$ such that, for every admissible
pair $(u,\rho)$, its restriction $f$ to $\vec R_u$ satisfies:
\begin{enumerate}
\item $f:\vec R_u\to\mathbb C$ is continuous and one-to-one;
\item $f(z)\in\Phi_z$ for every $z\in\vec R_u$;
\item $f(0)=\psi_\rho$;
\item $f(tu)\to\rho$ as $t\to\infty$;
\item $S_{u,\rho}=f(\vec R_u)$ is a nearly logarithmic spiral
with center $\rho$, using the geometric definition in Section 1.1;
\item for every $X=(x_n)$, the values $\phi_n=f(x_nu)$ give a
sequence $\vec\phi_{u,\rho}\in\vec\Phi_{X,u}$ starting at
$\psi_\rho$ and converging to $\rho$.
\end{enumerate}
The curve and $f^{cut}$ are independent of $X$. The simultaneous
extension $f^{cut}$ across rays is an additional conjectural assertion;
experiments on a single ray do not establish it.
\end{conjecture}
\hskip -.2in
\textsection  \thinspace 
The next conjecture is
motivated by the analogy mentioned in the 
introduction and discussed later in the article.
\begin{conjecture}\label{conj:inverse}\rm
For each $X$ and $u$ there is a many-to-one function
$G=G_{X,u}$ on $\mathbb C$, independent of $\rho$, with the following
properties for every admissible $(u,\rho)$ and the values in
Conjecture \ref{conj:curve}:
\begin{enumerate}
\item $G(\phi_n)=\phi_{n-1}$ for $n\geq1$;
\item $G(\rho)=\rho$, the complex derivative $G'(\rho)$ exists,
and $|G'(\rho)|>1$;
\item there is an open connected set $U_\rho$ containing
$S_{u,\rho}\cup\{\rho\}$ and a map
$F_\rho:U_\rho\to U_\rho$ such that
\[
 G\circ F_\rho=\operatorname{id}_{U_\rho},\qquad
 F_\rho(\rho)=\rho,\qquad
 F_\rho(\phi_n)=\phi_{n+1},\qquad
 F_\rho(S_{u,\rho})\subset S_{u,\rho}.
\]
\end{enumerate}
Thus $F_\rho$ is injective and
$F_\rho\circ G=\operatorname{id}$ on $F_\rho(U_\rho)$ only.
The domain and map may depend on $X,u,\rho$.
Existence of this inverse map on a neighborhood of the whole spiral
is part of the conjecture.
\end{conjecture}
\begin{remark}
Obviously, it is 
 not surprising 
 that the $V_z$ might in some
way pick out
zeta zeros and fixed points
as special.
On the other hand, 
our numerical methods make it
feasible
to estimate the 
parameters incorporated in
the equations of
simple curves interpolating 
$V_z$ fixed points. 
This is possible because the parameter $z$ can be sampled on finer
grids and the resulting fixed points appear to follow continuous curves, something
we were unable to do for
analogous curves we studied in \cite{Br2},
where to do so would have required 
doing something else we do not know
how to do: extending
the iteration of zeta to non-integer heights.
(In the notation we will introduce below,
it would have required evaluating
expressions of the form
$\zeta^{\circ q}(s)$
for non-integer values of $q$.
There is some literature around the
problem of extending the
iteration operator, \it e.g.\rm, 
\cite{N2, N1}, but we have not
reduced those
results to code. In the
situation of the present article,
we do not have to
confront this difficulty.)
In the present situation,
conjecturally, 
these curves are nearly logarithmic 
spirals with 
a Riemann zero $\rho$ 
and the zeta fixed point 
$\psi_{\rho}$ 
as center and initial point, 
respectively. At present,
we are able to make
the estimates only for
one $\rho$
at a time. But
more study of these
estimates may be a way
to search for a
dictionary between
zeta zeros and zeta fixed points.
The Riemann hypothesis might
then be rephrased as a 
claim about the fixed
points. In another direction,
the convergence properties
of sequences from the
$\Phi_z$
bear upon the 
Riemann hypothesis
as well. (See question 1 below.)
\end{remark}
\begin{remark}
Conjecture 3 
(in which $\psi = \psi_{\rho}$) 
is
supported by some
experimental evidence
(see below). Based on the
analogy with the
situation of \cite{Br2} 
promised in the 
introduction and discussed in
the next section,
it seems plausible 
that conjecture 3
might extend to
arbitrary $\psi$.
To find experimental evidence
for other $\psi$
(corresponding to
evidence for the
analogous claim in \cite{Br2}),
we  would first need to identify 
the unknown
zeta-analogue $G$, because
the needed sequences in \cite{Br2} 
corresponding to the
$\vec{\phi} \in \vec{\Phi}_{X,u}$
in  conjectures  3 and 4
were obtained by  solving equations 
involving zeta.
Conjecture 4 is also
a claim about $\psi_{\rho}$,
since it involves the
curve $S_{u, \rho}$ and 
clause (6) of conjecture 3
associates $\psi_{\rho}$
to $S_{u, \rho}$.
Therefore the question arises
of the extension
of conjecture 4 to
arbitrary $\psi$.
Conjecture 4 is
based entirely on the analogy
with the situation of \cite{Br2}.
Thus, the same requirements
for finding experimental
evidence for its extension
to arbitrary $\psi$ apply
to conjecture 4 itself.
\newline \newline
We write down the
contemplated extensions 
of
conjectures 3 and 4
(labeled more tentatively there as
``proposals'')
in more detail in the 
appendix.
\end{remark}
\vskip .05in
\hskip -.2in
\textsection  \thinspace 
When iteration of
a function $f$ is meaningful,
we write
$f^{\circ 0}(s) = s$ and, for  
$n$ a positive integer,
$f^{\circ n}(s) = f(f^{\circ (n -1)}(s))$.
\begin{remark}
Assuming Conjectures \ref{conj:curve} and \ref{conj:inverse},
$F_\rho^{\circ n}(\psi_\rho)=\phi_n=f(x_nu)\to\rho$.
This gives the existence of a convergent sequence in
$\vec\Phi_{X,u}$. The stated convergence comes from Conjecture
\ref{conj:curve} and the identities advancing these values; it is not
deduced from $|G'(\rho)|>1$.
The discrete diagnostics in Conjecture \ref{conj:sequences} remain
separate assertions.
\end{remark}
\begin{conjecture} \rm
Here, it will be
convenient to indicate the
dependence of $G$ upon
$X$ and $u$ 
by writing $G = G_{X, u}$.
Suppose that
$X'$ is a refinement of  
$X$ in the sense that
the common difference  
between consecutive elements of  $X$ 
and between  consecutive elements of 
$X'$ are  $d, d'$ respectively
and  $d=Kd'$ for some
natural number $K$. Then 
$G_{X,u}(s)=G_{X',u}^{\circ K}(s)$ on their common domain of
finite, defined compositions (in particular, near each admissible
$\rho$).
\end{conjecture}
\subsection{Conjectures 4, 5
and the promised analogy.} 
In \cite{Br2}, we studied
the dynamical system of zeta
and several associated 
nearly logarithmic 
sequences
$\vec{\phi}_{\psi, \rho, \zeta} = 
(z_0, z_1, ...)$
such that \newline \newline
(*) $z_0 = \rho$, \newline
(**) $z_{n-1} = \zeta(z_n)$ for all 
$n > 0$,
and 
\newline
(***) 
$\lim \vec{\phi}_{\psi, \rho, \zeta} = \psi$.
\newline \newline
In that article, we
conjectured that such
a sequence existed
for each choice of the 
pair $(\psi, \rho)$,
but local attraction does not by itself prove this global assertion.
For a repelling fixed point $\psi$ with $|\zeta'(\psi)|>1$, the inverse
function theorem gives a local inverse $H$ fixing $\psi$, with
$H'(\psi)=1/\zeta'(\psi)$ and hence $|H'(\psi)|<1$. On a sufficiently small disk $U$ about $\psi$,
$H(U)\subset U$ and $H^{\circ n}(w)\to\psi$ for $w\in U$;
this is the local attracting behavior discussed in \cite{HY},
Theorem 2.6. To obtain a backward orbit beginning at a prescribed
$\rho$, one additionally needs a finite chain
\[
 z_0=\rho,\quad \zeta(z_j)=z_{j-1}\ (1\leq j\leq N),
 \quad z_N\in U.
\]
If such a chain exists, appending $H^{\circ k}(z_N)$ supplies the
required orbit. Existence of the finite chain is not a consequence of
local inversion, and the repelling hypothesis cannot be assumed for
all zeta fixed points (the real fixed point discussed in Section 2.3
is attracting). Thus this analogy supplies motivation, not a proof.

The behavior of iterates
of zeta were shown to
be similar in the range of our observations.
So we think it
is plausible to
propose a dynamical explanation
for the attraction of
the sequences in $\vec{\Phi}_{X,u}$
to Riemann zeros that draws on an analogy
with the proposals in \cite{Br2}.
In this analogy, in the 
context of \cite{Br2},
with 
 $X_n$ (say) $= (0, n, 2n, ...)$
$G$ should
correspond to $\zeta^{\circ n}$, 
the $F$'s
should
correspond to (complex analysis sense) 
branches of $\zeta^{\circ n (-1)}$,
and
the roles of the 
zeta fixed points $\psi$ and the
zeta zeros $\rho$ in \cite{Br2}
should be swapped.
This idea results in conjecture 4,
conjecture 5,
and proposal 2 (appendix.)
\newline \newline
We ask the following question for the real ray $u=1$.
\begin{question}
For every $X=(0,d,2d,\ldots)$ with $d>0$ and every
$\vec\phi\in\vec\Phi_{X,1}$, is it true that
$\Re\phi_n\to\sigma\in(0,1)$ implies $\sigma=\tfrac12$?
\end{question}

\section{\sc methods}
\subsection{Numerical specification and limitations.}
The supplied \textit{Mathematica} notebooks
\texttt{variants figs 5, 6redo.nb}, \texttt{variants fig. 9.nb}, and
\texttt{new variants fig10.nb} specify the calculations behind the
principal diagnostics. Their root-generation cells solve
\[
 \zeta(s)e^{xs}-s=0
\]
using \texttt{FindRoot}, initialized separately at $s=\rho_a$ for each
$x$. The settings in the root-generation cells are
\begin{center}
\begin{tabular}{lr}
\texttt{PrecisionGoal} & 200\\
\texttt{WorkingPrecision} & 600\\
\texttt{\$MinPrecision} (in function evaluations) & 400
\end{tabular}
\end{center}
The file \texttt{betterbarryzeros} contains the zero approximations.
These are requested numerical settings, not certified accuracy bounds.
In particular, a small equation residual does not by itself certify
which root was selected or bound its location error.

The integer-parameter generation cells use $x=1,\ldots,300$.
The fine-grid cells use $x=50.001,50.002,\ldots,100$.
They do not compute a continuation from $x=0$ in these runs.
The subsequently supplied data contain 2415 zero approximations,
30,000 integer-parameter records (300 for each of the first 100 zeros),
and all 50,000 fine-grid records in \texttt{run24jul17no1}, in order.
The later helper package \texttt{myFunctionsMathematica10.m} contains
the routines \texttt{lineParameters2} and \texttt{relDevWOLogs10}
used in the diagnostic calculations.

The Figure 9 notebook creates \texttt{run26jul17no5} from
\texttt{run24jul17no1}, adding $\log|\rho_1-\phi|$ and the increasing
argument lift of $\rho_1-\phi$. Thus the original processed file is not
needed to reconstruct the plotted quantities. For this revision, the
differences $\rho_1-\phi$ were formed with 180-decimal-digit arithmetic,
then their log distances, angles, fitted coefficients, and polygon
lengths were computed in double precision. For example, the slope at
filter index 1 is approximately $-0.00007975689356$, and the transformed
polygon length using all 50,000 vertices is approximately $24466.0069$.
These are reconstructions of finite numerical diagnostics; the original
root searches have not been rerun, and no certified error bounds for
the computed roots or diagnostics are asserted.
The inputs contain revised plotting cells as well as earlier variants;
no complete figure-to-run provenance or certified error analysis is
available. In particular, the longer run for Figure
\ref{seqfig8r1c1.png} and its precision are not specified by these
root-generation cells.

\subsection{Quadrant plots.}
We will make use 
of colored plots
(``quadrant plots'').
The  routine that makes
quadrant plots
takes as inputs
a specification 
of the image resolution,
the center and dimensions
of a region $R$
in the complex plane,
and a routine to compute some
$\mathbb{C} \rightarrow \widehat{\mathbb{C}}$ 
function
$f$. The
output colors a 
small square (the size of
which depends on the 
resolution) 
around each point $w$
of a regular lattice in $R$;
the color is chosen to
represent
the quadrant of $f(w)$.  
Similar methods
that do not use coloring are
put to use in, \it e.g\rm,
\cite{A}.)
\newline \newline
The pixel representing the square
is colored according to the rules in 
Table 1.
In the table,
the region $D$ is a disk with center $s = 0$ and 
large radius $r$ (chosen as may be convenient.) 
We denote the complement of $D$ as $-D$.
\newline \newline 
\hskip 1in
\begin{tabular}{|c|c|} \hline
Location of $f(s)$& Color of pixel depicting region containing 
$s$\\ \hline \hline
real and imaginary axes & black \\ \hline 
$D \hskip .05in\cap $ Quadrant I  &  rich blue\\ \hline 
$ - D \hskip .05in\cap$ Quadrant I  &  pale blue\\ \hline
$D \hskip .05in\cap $ Quadrant II  & rich red \\  \hline
$ - D \hskip .05in\cap$ Quadrant II  &  pale red\\ \hline
$D \hskip .05in\cap $ Quadrant III  & rich yellow \\ \hline 
$ - D \hskip .05in\cap$ Quadrant III  &  pale yellow\\ \hline
$D \hskip .05in\cap $ Quadrant IV  &  rich green\\ \hline
$ - D \hskip .05in\cap$ Quadrant IV  &  pale green\\ \hline 
\end{tabular} 
\vskip .1in
\hskip .5in
\sc{table 1: coloring scheme for quadrant plots}
\rm
\vskip .1in 
\hskip -.2in
The junction of four rich colors 
represents a zero,
the junction of four pale colors represents a pole,
and
the boundary of two appropriately-colored regions is an 
$f$ pre-image of an
axis. An example is shown in Figure \ref{leaf2point1.png}.
(We have superimposed a pair of axes on this 
quadrant plot.)
\begin{figure}[!htbp]
\centering
\includegraphics[scale=.8]{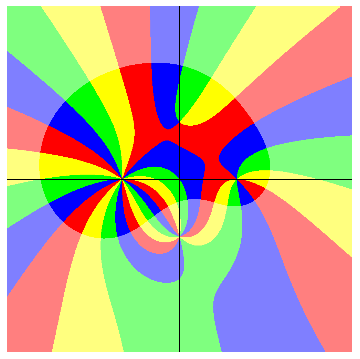}
\vskip .07in
\caption{Quadrant plot of $s \mapsto 
(s - 1)^2 (s - i) (s + 1)^5/(s+i)^3$}
\label{leaf2point1.png}
\end{figure}
\newpage
\subsection{Basins of attraction.}
For $z \in \mathbb{C} \rm \cup \{\infty\}$, 
let $A_z :=\{w \in \mathbb{C}$ s.t.
$\lim_{n \to\infty} 
\zeta^{\circ n}(w) = z\}$
(the ``basin of attraction'' of $z$ under 
zeta iteration.)
Only points for which all the iterates are defined are included in
these sets. Their intricate boundaries are illustrated numerically
in \cite{W}.
(As far as we know,
plots of  $A_{\infty}$
were made first by 
Woon in \cite{W}.)
Let $\phi \approx -.295905$
be the largest negative
zeta fixed point.
As we noted in \cite{Br2},
plots of
$A_{\phi}$ and the complement in
$\mathbb{C}$ of $A_{\infty}$
are indistinguishable to the eye.
These sets need not coincide: any finite periodic orbit other than
the fixed point $\phi$ belongs to the complement of $A_\infty$ but
not to $A_\phi$.
We reproduce plots of 
$A_{\phi}$ and the complement in
$\mathbb{C}$ of $A_{\infty}$
in Figure \ref{leaf1point1.png}
(appendix.)
\section{\sc Basis of the claims}
\subsection{Conjecture 1.}
\subsubsection{Existence and uniqueness of 
\texorpdfstring{$\psi_{\rho}$}{TEXT}.}
The right panel of Figure 
\ref{leaf3point2left.png}
(Figure 3.2 of \cite{Br2}) 
in the appendix is 
a quadrant plot
of the function 
$s \mapsto \zeta(s) - s$,
the zeros of which are the zeta fixed points.
The quadrant plot is shown superposed on
a plot of $A_{\phi}$.
The major feature
of this escaping set 
is that it consists of an infinite family of
irregularly shaped bulbs straddling
the critical strip, each
one of which 
(except for the central bulb, referred to
in \cite{Br2} as a
``cardioid'') contains one Riemann
zero (trivial or nontrivial,
but only the cardioid and 
the bulbs associated to
nontrivial zeros are visible at the scale
of Figure \ref{leaf3point2left.png}.) 
One zero of 
$s \mapsto \zeta(s) - s$
evidently lies on
one filament decorating
each of
the visible 
non-cardioid bulbs. 
These filaments
each consist of smaller bulbs
of the zeta escaping set, and
there is a numerical pattern
(described in \cite{Br2})
dictating the distribution of zeta 
fixed points among 
these bulbs. The fact that
this distribution is non-random
is evidence (we would argue)
that the association of
zeta fixed points to the Riemann zeros
is one-to-one, and, therefore,
that  there is probably
a unique $\psi_{\rho}$
for each $\rho$.
\subsubsection{The case
\texorpdfstring{$u=1$}
{TEXT}.} 
Clause 1 of conjecture 1 is the claim that  
the  set of imaginary parts 
$\{\Im \phi: \phi \in \Phi_z \}$ 
is unbounded
and nonempty.
The
set $\Phi_z$ is the set of solutions 
of the function 
$s \mapsto V_z(s) - s$.
When $z = 0$, 
$\Phi_z = \Phi_0$ is the set of fixed points
of the Riemann zeta function. 
Plots of these fixed points in \cite{Br2} 
provided our evidence for this case of 
the conjecture; we will not reproduce
them here. 
Using our particular
methods, we can only spot-check
this claim for selected $z$
and selected regions of the complex
plane. 
In Figure \ref{seqfig2.png} below, we display a
quadrant plot of the function 
$s \mapsto V_1(s) - s$
on a $2000$ by $2000$ square
with center at $s=0$ in the complex plane.
\begin{figure}[!htbp]
\centering
\includegraphics[scale=.8]{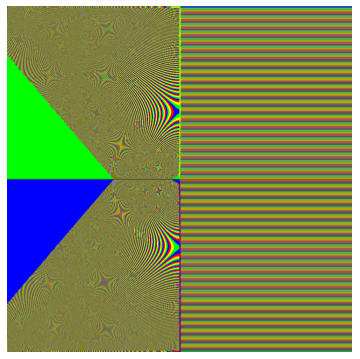}
\vskip .07in
\caption{2000 by 2000 quadrant plot of $s 
\mapsto V_1(s) - s$}
\label{seqfig2.png}
\end{figure}
Visible zeros of this function (lying at 
the junctions of four colors) lie in the vicinity
of the imaginary axis. There are also zeros (not visible
at this scale) in the vicinity of each nontrivial
Riemann zero in the depicted region,
on the boundary of a triangular region 
(barely
visible at this scale) near the center of the plot, 
and along the 
negative real axis.
The straight
boundaries of the green and blue
regions  do 
not (on inspection at smaller scales)
 contain any zeros of this function.
 \newline \newline
 In Figure \ref{seqfig3r1c1.png} below, we  display
quadrant plots of 
$s \mapsto V_k(s) - s, k = 2, 4, 8$
and  $16$, each centered at $s = 0$,
in squares with side length  
$400,  2400, 1.2 \times 10^5$, and  
$2 \times 10^8$ respectively.
They are shown clockwise from upper left in the 
order of the size of $k$.
 \begin{figure}[!htbp]
\centering
\includegraphics[scale=.45]{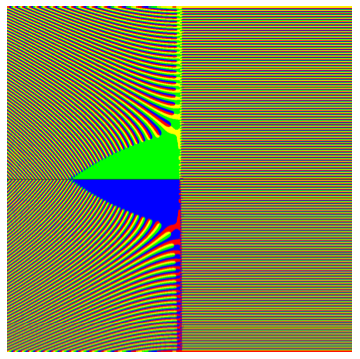}
\hskip .05in
\includegraphics[scale=.45]{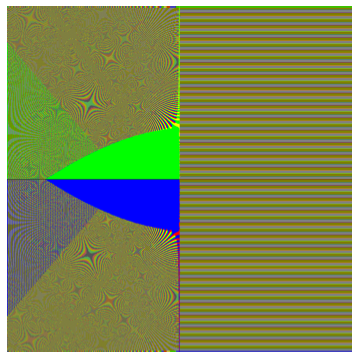}
\vskip .05in
\includegraphics[scale=.45]{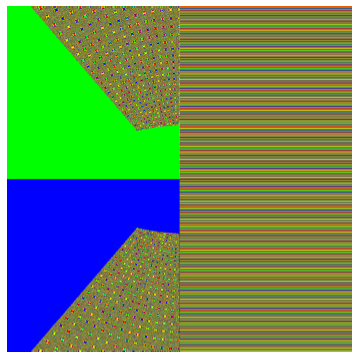}
\hskip .05in
\includegraphics[scale=.45]{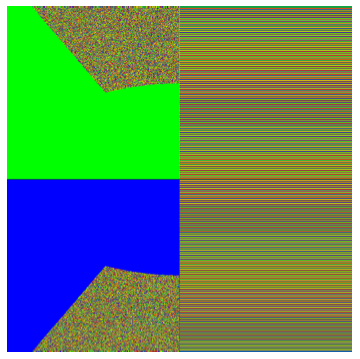}
\vskip .07in
\caption{Quadrant plots of $s 
\mapsto V_k(s) - s, k = 2, 4, 8$ and $16$}
\label{seqfig3r1c1.png}
\end{figure}
\newline \newline
Since the Riemann hypothesis has been
verified well beyond
the range of our experiments,
we are safe in designating the
$n^{th}$-by-height nontrivial zero  
in the upper half plane as $\rho_n$.
Figure \ref{seqfig4left.png} 
shows (at the four-color junctions)
zeros of 
$s \mapsto V_{100}(s) - s$ on two 
$1.2$ by $1.2$ squares, 
one centered at $\rho_1$
and the other centered at $\rho_{649}$.
The zeros along the 
left sides of the squares
apparently belong (as we will explain below) 
to sequences of fixed points  $\phi_n$
of the $V_n$, the real parts of 
which converge to zero
(but it is not clear that 
the imaginary parts converge at all.)
The zeros very near 
the centers of the squares
appear to belong
to such sequences converging to $\rho_1$
and $\rho_{649}$, respectively,
as we will also explain below.
 \begin{figure}[!htbp]
\centering
\includegraphics[scale=.4]{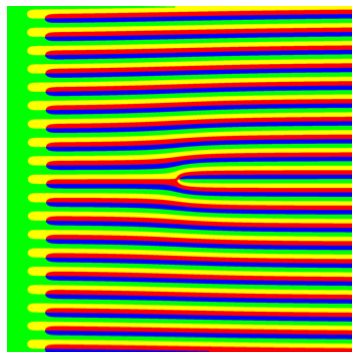}
\includegraphics[scale=.4]{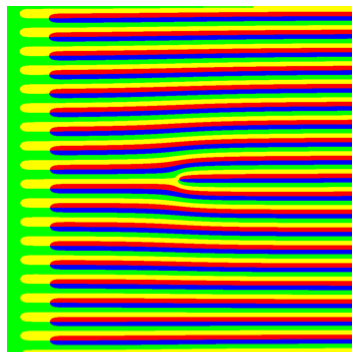}
\vskip .07in
\caption{Quadrant plots of $s \mapsto V_{100}(s) - s$
near $\rho_1$ and $\rho_{649}$.}
\label{seqfig4left.png}
\end{figure}
These and similar plots, which depict the 
fixed points of various $V_z$ as (apparently) 
unbounded sets of isolated points
in the complex plane,
are the basis of conjecture 1.
\subsection{Conjecture 2: the case
\texorpdfstring{$u=1$}{}.}
This is the
claim that, for $X$ and $\rho$
as above,
some  $\vec{\phi} \in \vec{\Phi}_{X,1}$
converges to $\rho$,
is nearly logarithmic with respect to
$\rho$, and is nearly uniformly 
distributed  with  respect to $\rho$.
We will describe experiments 
that tend to support this claim
for
 $X = (0, 1, 2, ...), X' = 
 (0, \frac 15, \frac 25, ...)$ 
 and for  
$\rho_n$ with $n$ selected from 
the range $1 \leq n \leq 100$. 
\subsubsection{Typical plots.}
The red points depict computed fixed points, conjecturally the
values $\phi_n=f(x_nu)$ in Conjecture \ref{conj:curve}.
For $\phi\ne\rho$, the notebook's displayed complex coordinate is
\begin{equation}\label{eq:display}
 T_\rho(\phi)=\log|\phi-\rho|\,
                  \frac{\rho-\phi}{|\rho-\phi|}.
\end{equation}
The displayed center is the coordinate origin, not $\rho$.
Thus $|T_\rho(\phi)|=|\log|\phi-\rho||$.
When $|\phi-\rho|<1$, the displayed point points in the direction
$\phi-\rho$ and moves outward as the original point approaches $\rho$.
When $|\phi-\rho|>1$, its direction is reversed.
Blue chords join consecutive displayed points. This signed
logarithmic transformation does not preserve Euclidean lengths.

For the log-radius-versus-angle panels, the notebook lifts
$\operatorname{Arg}(\rho-\phi_n)$ with the same minimal positive
increment rule as in Section 1.1; denote this lift by $\alpha_n$.
It differs from $\theta_{\vec\phi}(\phi_n)$ by one constant modulo
$2\pi$, with a consistent real representative. This changes the fitted
intercept but not the slope, angular increments, or residuals.
For the error panels, the routine \texttt{relDevWOLogs10} fits all
51 points $\phi_n,\ldots,\phi_{n+50}$ and returns the relative error
at the last point. Thus the plotted quantity is
\[
 h_n=d_{n,50}=
 \frac{|m_{n,50}\theta_{\vec\phi}(\phi_{n+50})+b_{n,50}
                 -\log|\phi_{n+50}-\rho||}
      {|\log|\phi_{n+50}-\rho||}.
\]
The plot uses $\log h_n$ and omits computed zero values.
The diagnostic cells request fitting precision 2000; this cannot add
accuracy to the root data. Their error panels provide finite numerical
evidence only for the displayed window size $K=50$, not the
all-$K$ asymptotic definition.

We  made plots  from which we  formulated 
conjecture 2. In one experiment,
we  
examined the  zeros 
$\rho_n, 1 \leq n \leq 100$.
Figures \ref{seqfig5r1c1.png} and 
\ref{seqfig6r1c1.png}  are typical outcomes
for $X = (0, 1, 2, ...)$. 
We chose them for display because,
together, they  indicate the dependence
upon $\delta_n$ of the  appearance
of the spirals.  
Figure  7  depicts several 
spirals centered on other zeros; 
we omit the  statistics
for these zeros, which are 
consistent with our conjectures.
 \begin{figure}[!htbp]
\centering
\includegraphics[scale=.45]{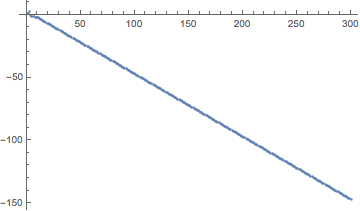}
\hskip .05in
\includegraphics[scale=.45]{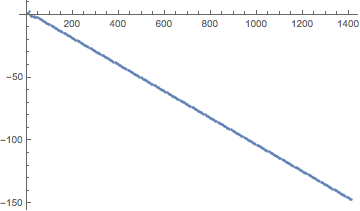}
\vskip .05in
\includegraphics[scale=.45]{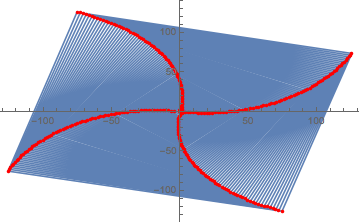}
\hskip .05in
\includegraphics[scale=.45]{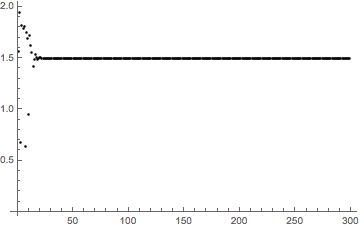}
\vskip .05in
\includegraphics[scale=.45]{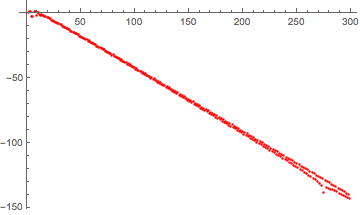}
\hskip .05in
\includegraphics[scale=.45]{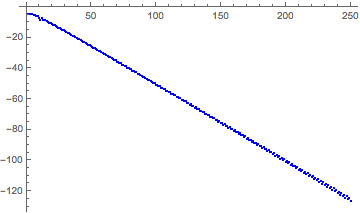}
\vskip .07in
\caption{Zero $\rho_{1}, u = 1, X  = (0, 1, 2, ...)$, 
$1 \leq n \leq 300$
\newline row 1, column 1:
$\log |\phi_n - \rho_{1}|$ vs. $n$
\newline  row 1, column 2: 
$\log |\phi_n - \rho_{1}|$ vs. $\alpha_n$
\newline row 2, column 1: logarithmically scaled plot
of  $\vec{\phi}$
\newline row 2, column 2: $\delta_n/\pi$ vs. $n$
\newline 
\newline row 3, column 1: $\log \Delta_{n-1}$ vs. $n$
\newline row 3, column 2:  $\log h_n$ vs. $n$ (K =  50)}
\label{seqfig5r1c1.png}
\end{figure}
\newpage
 \begin{figure}[!htbp]
\centering
\includegraphics[scale=.45]{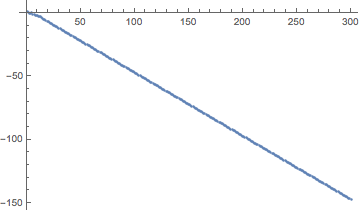}
\hskip .05in
\includegraphics[scale=.45]{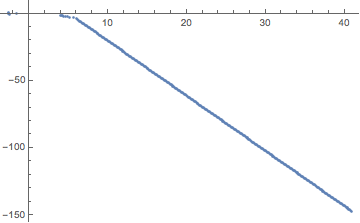}
\vskip .05in
\includegraphics[scale=.45]{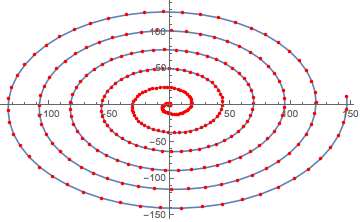}
\hskip .05in
\includegraphics[scale=.45]{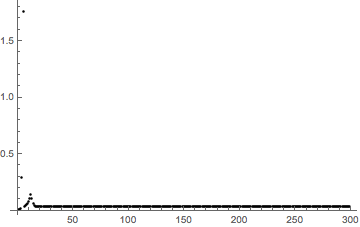}
\vskip .05in
\includegraphics[scale=.45]{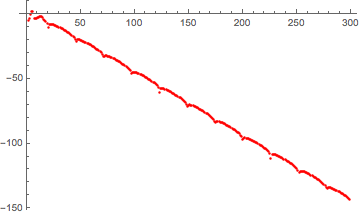}
\hskip .05in
\includegraphics[scale=.45]{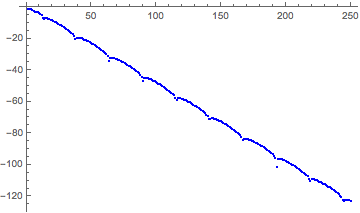}
\vskip .07in
\caption{Zero $\rho_3, u = 1, X  = (0, 1, 2, ...)$, 
$1 \leq n \leq 300$
\newline row 1, column 1:
$\log |\phi_n - \rho_{3}|$ vs. $n$
\newline  row 1, column 2: 
$\log |\phi_n - \rho_{3}|$ vs. $\alpha_n$
\newline row 2, column 1: logarithmically scaled plot
of  $\vec{\phi}$
\newline row 2, column 2: $\delta_n/\pi$ vs. $n$
\newline 
\newline row 3, column 1: $\log \Delta_{n-1}$ vs. $n$
\newline row 3, column 2:  $\log h_n$ vs. $n$ (K =  50)}
\label{seqfig6r1c1.png}
\end{figure}
 \begin{figure}[!htbp]
\centering
\includegraphics[scale=.40]{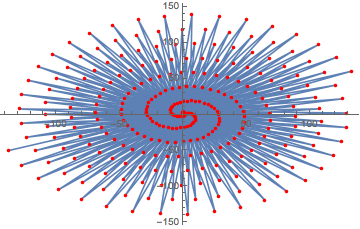}
\hskip .03in
\includegraphics[scale=.40]{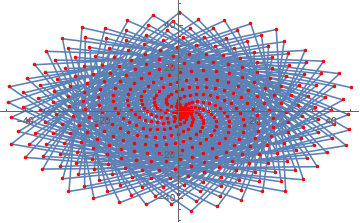}
\vskip .03in
\includegraphics[scale=.40]{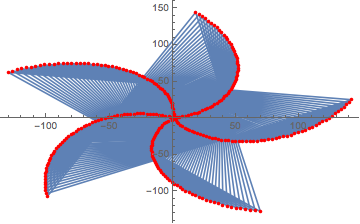}
\hskip .03in
\includegraphics[scale=.40]{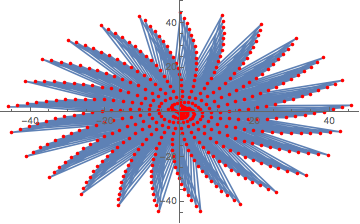}
\vskip .03in
\includegraphics[scale=.40]{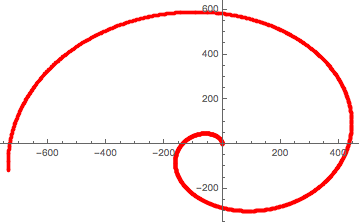}
\hskip .03in
\includegraphics[scale=.40]{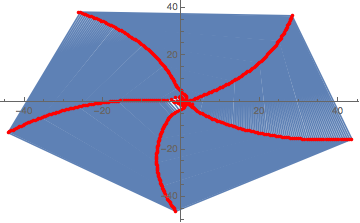}
\vskip .03in
\includegraphics[scale=.40]{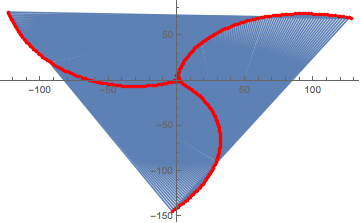}
\hskip .03in
\includegraphics[scale=.40]{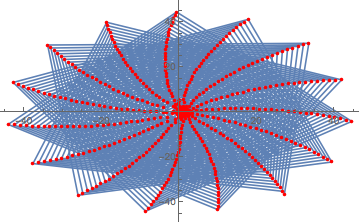}
\vskip .03in
\includegraphics[scale=.40]{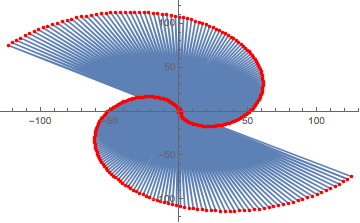}
\hskip .03in
\includegraphics[scale=.40]{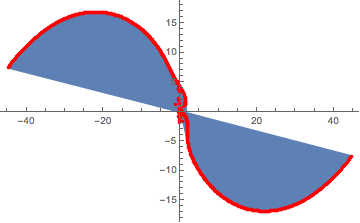}
\vskip .03in
\caption{Row 1, $\rho_7$; row 2, $\rho_{65}$;
row 3,  $\rho_{70}$; row 4, $\rho_{78}$,
row 5, $\rho_{82}$;\newline
column 1, $X = (0, 1, 2, ...)$; column 2, 
$X = (0, \frac 15, \frac 25, ...)$}
\label{seqfig7zr7left.png}
\end{figure}
\begin{figure}[!htbp]
\centering
\includegraphics[scale=.45]{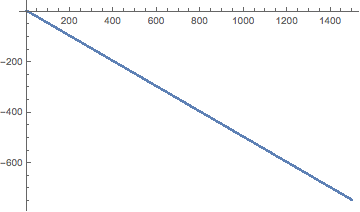}
\hskip .05in
\includegraphics[scale=.45]{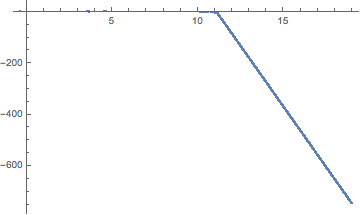}
\vskip .05in
\includegraphics[scale=.45]{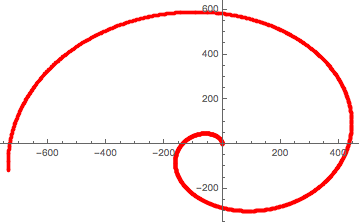}
\hskip .05in
\includegraphics[scale=.45]{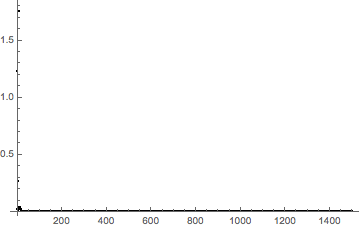}
\vskip .05in
\includegraphics[scale=.45]{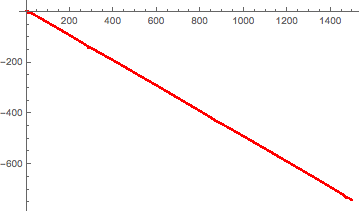}
\hskip .05in
\includegraphics[scale=.45]{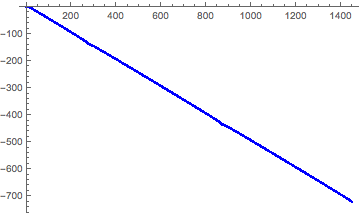}
\vskip .07in
\caption{Nontrivial zero $\rho_{70}, X  = (0, 1, 2, ...)$, 
$0 \leq n \leq 1500$;\newline
row 1, column 1:
$\log |\phi_n - \rho_{70}|$ vs. $n$;\newline
row 1, column 2: 
$\log |\phi_n - \rho_{70}|$ vs. $\alpha_n$;\newline
row 2, column 1: logarithmically scaled plot
of  $\vec{\phi}$;\newline
row 2, column 2: $\delta_n/\pi$ vs. $n$;\newline
row 3, column 1: $\log \Delta_n$ vs. $n$;\newline
row 3, column 2:  $\log h_n$ vs. $n$ (K =  50)}
\label{seqfig8r1c1.png}
\end{figure} \hskip -.1in
\subsubsection{An anomaly.}
For the zero
$\rho_{70}$, while the other
statistics we display in these 
plots were entirely consistent
with our conjectures, $\delta_n$
was so small in the range 
$0 \leq n \leq 300$ that the spiral 
structure for $X = (0, 1, 2, ...)$ 
was not obvious and
we had to extend our observations 
to the range $0 \leq n \leq 1500$
to see it. 
It appears that, for $\rho_{70}$, $\delta_n$
converges rapidly to a limit. One displayed revolution takes
approximately $2\pi/\delta_n$ steps; our numerical estimate is about
$1187$ steps per revolution. This estimate corresponds to an angular
increment of approximately $2\pi/1187\approx0.001685\pi$.
This concerns the discrete displayed angle and does not determine
how many turns a continuous interpolating curve makes between
consecutive values.
%see seqfig7.nb
We have not encountered 
another spiral like this. 
(The anomaly does not extend, for example,
to the
case $X = (0, \frac 15, \frac 25,...)$.)
The uniqueness (within the 
range of our observations) 
of this anomaly suggests that,
for the purpose of searching
for the unknown functions $G$,
one should look for an invariant of
the Riemann zeros that takes
an anomalous value at $\rho_{70}$.
As we said above,
we regard
the Riemann zeta function 
as analogous to $G$;
spirals  studied in 
\cite{Br2} are centered
upon zeta fixed points $\psi$,
and  $\lim_{n \to \infty} \delta_n$ 
appears to be determined by
$\arg \frac d{ds}\zeta(s)|_{s = \psi}$.
Therefore 
 we might expect
 the  invariant we are looking for
to
coincide with 
 $\arg \frac d{ds}G(s)|_{s = \rho_n}$,
which we might hope to identify 
without knowing $G$ explicitly.
We  have undertaken a 
cursory search for 
such an
invariant, so far without success.
The plots for
$\rho_{70}$ are in Figure \ref{seqfig8r1c1.png}. 
An interesting feature of the spiral
about $\rho_{70}$ for
$X = (0, 1, 2, ...)$ is that 
red points  $p_n$ and $p_{n'}$ on it
are close just if  $|n - n'|$
is small; on most  of the other
spirals we will display
in this article, this is not the case,
as one can  tell  by  keeping track of the
blue connecting chords. 
(The spiral about $\rho_3$
depicted in Figure \ref{seqfig6r1c1.png},
for which the $\delta_n$ are
also fairly small, is an 
exception.) In the plot
shown in row 3, column 1 of 
Figure \ref{seqfig7zr7left.png} 
(and reproduced in Figure 
\ref{seqfig8r1c1.png}),  
consecutive  $p_n$ are
so close that  these chords are
not visible.
\subsection{Conjecture 3.}
Conjecture \ref{conj:curve} asserts a continuous fixed-point curve,
independent of the sampling progression, starting at $\psi_\rho$ and
approaching $\rho$. Compatibility of values on rational grids does not
by itself establish a continuous extension. The following gives a
sufficient additional hypothesis on a fixed ray.

\begin{lemma}[Extension from compatible grids]\label{lem:extension}
Let $D\subset[0,\infty)$ be dense and contain $0$. Suppose compatible
grid values define one function $g:D\to\mathbb C$, uniformly
continuous on $D\cap[0,M]$ for every $M>0$, and
$g(t)\in\Phi_{tu}$ for $t\in D$. Then $g$ has a unique continuous
extension $f:[0,\infty)\to\mathbb C$ and $f(t)\in\Phi_{tu}$
for every $t\geq0$.
\end{lemma}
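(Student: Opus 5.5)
The plan is in two parts. First build the continuous extension by the standard completeness argument. Then pass the fixed-point equation to the limit, with the pole of $\zeta$ at $s=1$ as the only delicate point.

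\emph{Extension.} Fix $t\geq0$ and choose $M>t$. Because $D$ is dense, there are $t_k\in D\cap[0,M]$ with $t_k\to t$. The sequence $(t_k)$ is Cauchy, so uniform continuity of $g$ on $D\cap[0,M]$ makes $(g(t_k))$ Cauchy in $\mathbb C$; by completeness it converges. Interleaving two such approximating sequences shows the limit does not depend on the sequence, and the choice of $M$ is irrelevant since the restrictions to $D\cap[0,M]$ are compatible. Define $f(t)$ to be this limit. Then $f=g$ on $D$ (take constant sequences). The usual $\varepsilon$--$\delta$ argument shows $f$ is uniformly continuous on each $[0,M]$, in particular continuous. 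Uniqueness is immediate: two continuous functions agreeing on the dense set $D$ agree everywhere.

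\emph{Fixed-point property.} Fix $t\geq0$, take $t_k\in D$ with $t_k\to t$, and put $s_k=g(t_k)$, so that $s_k\to f(t)=:\lambda$. Since $s_k\in\Phi_{t_ku}$, we have
\[
 s_k=\zeta(s_k)\,e^{t_ku s_k}.
\]
I would first show $\lambda\neq1$; this is the main (if modest) obstacle, since $\zeta$ is not continuous at $1$. Suppose $\lambda=1$. The left side tends to $1$. On the right, $e^{t_kus_k}\to e^{tu}\neq0$, so its modulus is bounded below by a positive constant for large $k$, while $|\zeta(s_k)|\to\infty$ as $s_k\to1$. The right side is then unbounded while the left side converges, a contradiction; this mirrors the exclusion of the pole in the proof of Theorem~\ref{thm:limits}.

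Hence $\zeta$ is continuous (indeed holomorphic) at $\lambda$, and $(z,s)\mapsto e^{zs}$ is jointly continuous. Letting $k\to\infty$ gives $\lambda=\zeta(\lambda)e^{tu\lambda}$, that is, $f(t)\in\Phi_{tu}$. This holds for every $t\geq0$, including $t=0$, where it says $f(0)=g(0)$ is a fixed point of $\zeta$.
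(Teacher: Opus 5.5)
Your proof is correct and takes essentially the same route as the paper. You build the extension from uniform continuity and completeness and get uniqueness from density; you rule out the pole at $1$ by comparing both sides of the fixed-point equation, then pass to the limit using continuity of $\zeta$. The only cosmetic difference is that the paper rewrites the equation as $\zeta(g(t_j))=g(t_j)e^{-t_jug(t_j)}$, so the right side visibly has a finite limit, whereas you bound $|e^{t_kus_k}|$ from below; the two are equivalent.
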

\begin{proof}
Uniform continuity and completeness of $\mathbb C$ give a unique
continuous extension on each compact interval; the extensions agree
on overlaps by density. If $t_j\in D$ tends to $t$, then
\[
 \zeta(g(t_j))=g(t_j)e^{-t_ju g(t_j)}.
\]
The right-hand side has a finite limit. Thus $f(t)\ne1$, since otherwise
the left-hand side would diverge in modulus at the pole of $\zeta$.
Continuity of $\zeta$ at $f(t)$ now proves the fixed-point equation.
\end{proof}
The lemma does not prove injectivity, spiral geometry, the limit at
infinity, or the simultaneous extension across rays required in
Conjecture \ref{conj:curve}. Each remains an additional requirement.

Figure \ref{seqfig9r1c1.png} studies finite coarsenings of one computed
grid in $50<x\leq100$, with spacing $0.001$ and $u=1$, $\rho=\rho_1$.
For filter index $q$, the notebook retains list positions
$1,1+q,1+2q,\ldots$ within the list. Two such grids are nested when
the coarser index is a multiple of the finer index; successive integers
$q$ do not generally yield nested grids. Even a nested family of
coarsenings of this fixed finite grid cannot have mesh tending to zero.
An infinite refinement argument requires new grids of smaller spacing.

The notebook reduces the stored angles modulo $2\pi$, then recomputes
the minimal increasing lift separately for each filtered list, starting
with the first reduced angle in $[0,2\pi)$. It fits
$\log r=m_q\alpha+b_q$ by \texttt{LinearModelFit}. The displayed
integer filter indices are $q=1,\ldots,99$. The lower panels show
$\log|m_q-m_{q+1}|$ and $\log|b_q-b_{q+1}|$, $1\leq q\leq98$.
Re-lifting after filtering can discard turns, so these angle coordinates
need not be restrictions of one continuous angular coordinate.
The observed stabilization is a finite diagnostic, not a proof that
an infinite sequence is Cauchy.

\begin{lemma}[Convergence of fitted coefficients]\label{lem:coefficients}
Let $(m_j,b_j)$ be fitted coefficients along an infinite refinement
sequence, expressed in one common angular coordinate with fixed
origin. Suppose
\[
 \sum_{j=0}^\infty
 \bigl(|m_{j+1}-m_j|+|b_{j+1}-b_j|\bigr)<\infty.
\]
Then $m_j\to m$ and $b_j\to b$. If the summands are bounded by
$Cq^j$, where $C>0$ and $0<q<1$, then
\[
 |m-m_j|+|b-b_j|\leq\frac{Cq^j}{1-q}.
\]
\end{lemma}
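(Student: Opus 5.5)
The plan is to treat the statement as an instance of the completeness of $\mathbb R$ via absolutely summable increments. Write $a_j=|m_{j+1}-m_j|+|b_{j+1}-b_j|$. The hypothesis that all coefficients are expressed in one common angular coordinate with fixed origin is what makes the differences $m_{j+1}-m_j$ and $b_{j+1}-b_j$ meaningful quantities to compare. The lemma then needs nothing about fixed points or zeta; it is pure real analysis.

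First I will show that $(m_j)$ and $(b_j)$ are Cauchy. For $k>j$ the triangle inequality applied to the telescoping sums gives
\[
 |m_k-m_j|+|b_k-b_j|\le\sum_{i=j}^{k-1}\bigl(|m_{i+1}-m_i|+|b_{i+1}-b_i|\bigr)\le\sum_{i=j}^{\infty}a_i .
\]
The right-hand side is a tail of a convergent series, so it tends to $0$ as $j\to\infty$. Both sequences are therefore Cauchy in $\mathbb R$, and by completeness they have limits $m$ and $b$.

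Next comes the quantitative bound. Letting $k\to\infty$ in the displayed inequality, and using continuity of the absolute value, yields
\[
 |m-m_j|+|b-b_j|\le\sum_{i\ge j}a_i .
\]
Under the geometric hypothesis $a_i\le Cq^i$, this tail is at most $\sum_{i\ge j}Cq^i=Cq^j/(1-q)$, which is the stated estimate.

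I do not expect a genuine obstacle. The only point needing care is interpretive: if the angular coordinate changed between refinements, for example through re-lifting with discarded turns as discussed before the lemma, then the differences would not be increments of a single sequence. The common-origin hypothesis is exactly what excludes this, so I will simply note that it is used there.
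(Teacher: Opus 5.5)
Your proof is correct and is essentially the same as the paper's. Both telescope, bound $|m_k-m_j|+|b_k-b_j|$ by a tail of the convergent series to get the Cauchy property, let $k\to\infty$, and sum the geometric tail to obtain $Cq^j/(1-q)$.
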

\begin{proof}
For $k>j$, telescope both differences and apply the triangle
inequality. The resulting bound is the tail of the stated convergent
series, so both sequences are Cauchy. Passing $k\to\infty$ gives the
tail bound, and summing the geometric series gives the displayed estimate.
\end{proof}
Merely having successive differences tend to zero is insufficient;
for example, the partial sums of $\sum_{j\geq1}1/j$ diverge.
The summability hypothesis is a substantive strengthening and has not
been verified by the finite plots.

Coefficient convergence alone does not identify a curve as a logarithmic
spiral. Here is a sufficient residual condition that supplies that step.
\begin{lemma}[Identification of a logarithmic arc]\label{lem:arc}
Let $c:[a,b]\to\mathbb C\setminus\{\rho\}$ be continuous, and let
$\Theta$ be a continuous lift of $\arg(c-\rho)$. Let $P_j$ be nested
finite partitions of $[a,b]$ with mesh tending to zero. Suppose
$m_j\to m$, $b_j\to b_*$, and
\[
 \max_{t\in P_j}
 |\log|c(t)-\rho|-m_j\Theta(t)-b_j|\longrightarrow0.
\]
Then $\log|c(t)-\rho|=m\Theta(t)+b_*$ throughout $[a,b]$.
If $m\ne0$ and $\Theta$ is strictly monotone, $c$ parametrizes an
arc of an exactly logarithmic spiral.
\end{lemma}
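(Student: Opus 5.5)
The plan is to prove the identity first on the dense set $P:=\bigcup_j P_j$, and then extend it to all of $[a,b]$ by continuity. Put $L(t):=\log|c(t)-\rho|$. Since $c$ is continuous and never equals $\rho$, $L$ is continuous on $[a,b]$. The lift $\Theta$ is continuous by hypothesis, so $L-m\Theta-b_*$ is continuous as well.

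\emph{Step 1 (the identity on $P$).} Fix $t\in P$. Because the partitions are nested, $t\in P_j$ for every sufficiently large $j$. For such $j$ the triangle inequality gives
\[
 |L(t)-m\Theta(t)-b_*|
 \leq \max_{s\in P_j}|L(s)-m_j\Theta(s)-b_j|
 +|m_j-m|\,|\Theta(t)|+|b_j-b_*|.
\]
All three terms on the right tend to zero: the first by hypothesis, and the other two because $t$ is fixed (so $|\Theta(t)|<\infty$) and the coefficients converge. Hence $L(t)=m\Theta(t)+b_*$ for every $t\in P$. Nestedness is used only here, to ensure that each point of $P$ lies in all $P_j$ from some index on. This argument is pointwise, so it does not need uniform boundedness of $\Theta$. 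In any case $\Theta$ is bounded on the compact interval $[a,b]$, which would give a uniform version if one were wanted.

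\emph{Step 2 (density and continuity).} Take any $t\in[a,b]$. Because the mesh of $P_j$ tends to zero, for each $j$ there is $t_j\in P_j$ with $|t_j-t|\leq\operatorname{mesh}(P_j)$, so $t_j\to t$ and $P$ is dense. The continuous function $L-m\Theta-b_*$ vanishes on the dense set $P$, so it vanishes identically on $[a,b]$.

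\emph{Step 3 (the geometric conclusion).} Assume now that $m\ne0$ and $\Theta$ is strictly monotone. Then $\log|c-\rho|=m\Theta+b_*$ is strictly monotone, so the radius $|c-\rho|$ is strictly monotone. A point $c(t)$ is determined by its radius and its lifted angle, and both vary injectively with $t$, so $c$ is injective. Reparametrizing in the direction of decreasing radius if necessary, $c$ traces a piece of the curve $\rho+\exp\bigl((m+i)\theta+b_*\bigr)$ with $\theta$ ranging over an interval. This is an arc of an exactly logarithmic spiral in the sense of Section 1.1, with the same $m\ne0$ and with the lift $\Theta$ as its continuous argument.

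There is no real obstacle. The only point needing care is Step 1: the hypothesis gives the residual bound only at partition points, so it must be combined with nestedness to reach every point of $P$, and with the continuity of $L$ and $\Theta$, together with density, to reach the rest of $[a,b]$.
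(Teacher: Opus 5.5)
Your proof is correct and follows the same route as the paper. You obtain the identity on $\bigcup_j P_j$ by nestedness and passage to the limit, extend it by density and continuity, and deduce a strictly monotone radius and injectivity from $m\neq0$ and strict monotonicity of $\Theta$; your explicit triangle-inequality estimate, the mesh argument for density, and the formula $c-\rho=\exp\bigl((m+i)\Theta+b_*\bigr)$ fill in details the paper leaves implicit.
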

\begin{proof}
Every point of $\bigcup_jP_j$ belongs to all sufficiently fine
partitions. Passing to the limit proves the identity there.
The union is dense, and both sides are continuous, so the identity
holds everywhere. For $m\ne0$, strict monotonicity of $\Theta$
makes the radius strictly monotone and ensures injectivity.
\end{proof}
This lemma requires uniform \emph{absolute} residual control on the
partitions. The endpoint relative-error diagnostic $h_n$ does not
supply it. The finite data neither establish these hypotheses nor
prove exact logarithmic geometry. Likewise, fitting only $50<x\leq100$
does not connect this portion of the data to the initial fixed point
$\psi_\rho$ at $x=0$.

\begin{figure}[!htbp]
\centering
\includegraphics[scale=.45]{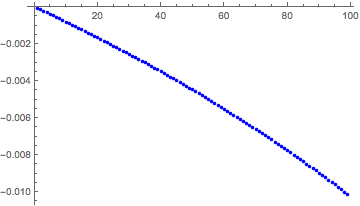}
\hskip .05in
\includegraphics[scale=.45]{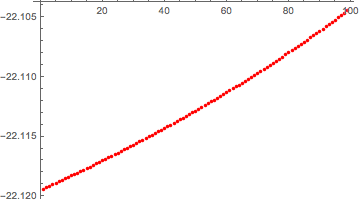}
\vskip .05in
\includegraphics[scale=.45]{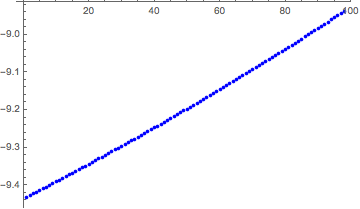}
\hskip .05in
\includegraphics[scale=.45]{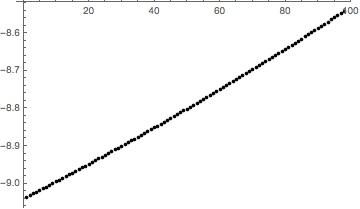}
\caption{Linear models of $\log r$ vs.
the recomputed angle $\alpha$ for $\rho=\rho_1$, $u=1$,
on filtered subsets of $(50.001,50.002,\ldots,100)$.
The horizontal coordinate is the filter index $q$.
\newline
row 1, column 1: slopes;
\newline 
row 1, column 2: intercepts;
\newline
row 2, column 1: $\log|m_q-m_{q+1}|$;
\newline
row 2, column 2: $\log|b_q-b_{q+1}|$
}
\label{seqfig9r1c1.png}
\end{figure}
\vskip .1in
\hskip -.23in
\thinspace
Figure \ref{seqfig10r1c1fi512.png} shows polygonal lines through
the transformed points $T_\rho(\phi)$ from \eqref{eq:display},
for filter indices $512,128,64,16$ in reading order.
These particular indices give nested vertex sets as the index decreases.
The third row shows sums of Euclidean chord lengths in the
\emph{transformed plane}, for indices $2^n$, $0\leq n\leq13$,
and $\lfloor1.11^n\rfloor$, $0\leq n\leq99$.
The latter indices are not generally nested; repeated indices can also
occur. The filtering routine retains the first point but need not retain
the last, so the terminal parameter can vary between polygons.
These finite plots do not give arc lengths in the original $s$-plane.

A rigorous length-convergence statement requires the following
additional hypotheses.
\begin{lemma}[Convergence of polygonal lengths]\label{lem:length}
Let $\Gamma:[a,b]\to\mathbb C$ be continuous and rectifiable.
Let $P_j$ be nested finite partitions containing both endpoints, with
mesh tending to zero. If
\[
 L_j=\sum_{[t_{k-1},t_k]\in P_j}
       |\Gamma(t_k)-\Gamma(t_{k-1})|,
\]
then $L_j$ increases to the length $L(\Gamma)$.
\end{lemma}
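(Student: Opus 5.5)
The proof uses two standard facts about inscribed polygons of a rectifiable curve: refining a partition cannot decrease the polygonal length, and the supremum of polygonal lengths over all partitions is $L(\Gamma)$. The plan is to show first that $L_j$ is nondecreasing and bounded by $L(\Gamma)$, so that it has a limit $L^*\le L(\Gamma)$. The remaining work is to prove $L^*\ge L(\Gamma)$, using the hypothesis that the mesh tends to zero together with uniform continuity of $\Gamma$ on $[a,b]$.

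\emph{Monotonicity and the upper bound.} Since $P_j\subset P_{j+1}$, each interval $[t_{k-1},t_k]$ of $P_j$ is split by the points of $P_{j+1}$ lying inside it. The triangle inequality gives $|\Gamma(t_k)-\Gamma(t_{k-1})|\le\sum|\Gamma(s_i)-\Gamma(s_{i-1})|$ over that subdivision. Summing over $k$ yields $L_j\le L_{j+1}$. The partitions contain both endpoints, so each $L_j$ is one of the sums whose supremum defines $L(\Gamma)$. Hence $L_j\le L(\Gamma)<\infty$, and therefore $L_j\uparrow L^*\le L(\Gamma)$.

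\emph{The lower bound, which is the main step.} Fix $\varepsilon>0$ and choose a partition $Q=\{a=s_0<\cdots<s_N=b\}$ with polygonal length $L_Q>L(\Gamma)-\varepsilon$. By uniform continuity, choose $\eta>0$ such that $|t-t'|<\eta$ implies $|\Gamma(t)-\Gamma(t')|<\varepsilon/(2N)$. Take $j$ so large that the mesh of $P_j$ is less than $\eta$, and compare $L_j$ with the length of the common refinement $P_j\cup Q$.
\begin{itemize}
\item Refining cannot decrease length, so $L_{P_j\cup Q}\ge L_Q$.
\item Each inserted point $s_i$ lies in some interval $[t_{k-1},t_k]$ of $P_j$ of length less than $\eta$. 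Inserting it replaces one chord by two. By the triangle inequality the increase is at most $|\Gamma(s_i)-\Gamma(t_{k-1})|+|\Gamma(t_k)-\Gamma(s_i)|<\varepsilon/N$.
\item There are at most $N$ insertions, so $L_{P_j\cup Q}\le L_j+\varepsilon$.
\end{itemize}
Combining these gives $L_j\ge L_Q-\varepsilon>L(\Gamma)-2\varepsilon$ for all large $j$. Hence $L^*\ge L(\Gamma)$, and so $L_j\uparrow L(\Gamma)$.

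The only delicate point is the insertion estimate, which needs care when several $s_i$ fall in the same interval of $P_j$. Inserting them one at a time handles this: each single insertion splits a subinterval of length less than $\eta$, so the bound of $\varepsilon/N$ per insertion still applies. Nestedness is used only for monotonicity; the convergence argument uses just the mesh condition.
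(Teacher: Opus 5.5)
Your proof is correct and has the same structure as the paper's: you get monotonicity and $L_j\le L(\Gamma)$ from the triangle inequality, then a lower bound by comparison with an arbitrary partition $Q$, using only the mesh condition and continuity. The one difference is in that comparison: the paper snaps the interior nodes of $Q$ to nearby ordered nodes of $P_j$, which gives a coarsening of $P_j$ whose chord sum is at most $L_j$ and tends to $L_Q$, whereas you insert the nodes of $Q$ into $P_j$ and bound the cost of the common refinement $P_j\cup Q$. Your version yields the explicit estimate $L_j>L(\Gamma)-2\varepsilon$ once the mesh is below $\eta$, and it avoids checking that the snapped nodes stay distinct and ordered.
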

\begin{proof}
The triangle inequality gives $L_j\leq L_{j+1}\leq L(\Gamma)$.
For any finite partition $Q$, approximate its interior nodes by ordered
nodes of $P_j$. The mesh condition and continuity imply convergence of
the corresponding chord sum to the chord sum for $Q$; that approximating
sum is at most $L_j$. Hence $\lim_j L_j$ is at least the chord sum
for every $Q$. Taking the supremum over $Q$, which defines length,
gives the reverse inequality.
\end{proof}
Applied to this experiment, the lemma would require a continuous
rectifiable curve $\Gamma=T_\rho\circ c$ on a fixed compact parameter
interval, a genuinely infinite refinement, and inclusion of both
endpoints in every partition. These are additional hypotheses, not
conclusions from the plots. Stabilization of lengths alone does not
construct a limiting curve or prove that it is logarithmic.

\begin{figure}[!htbp]
\centering
\includegraphics[scale=.45]{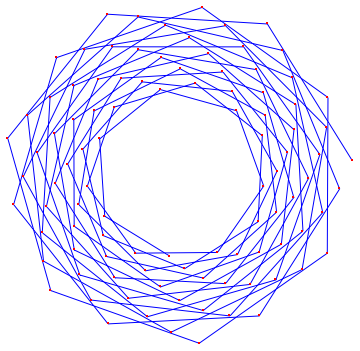}
\hskip .05in
\includegraphics[scale=.45]{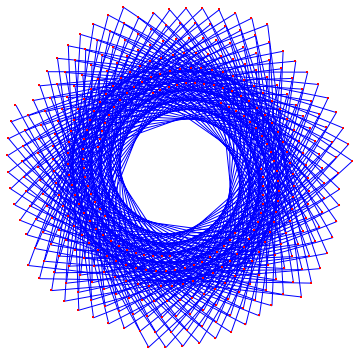}
\vskip .05in
\includegraphics[scale=.45]{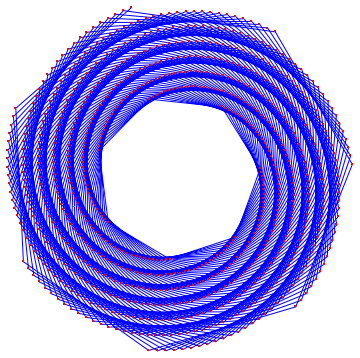}
\hskip .05in
\includegraphics[scale=.45]{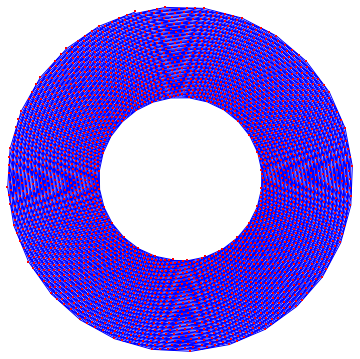}
\vskip .05in
\includegraphics[scale=.45]{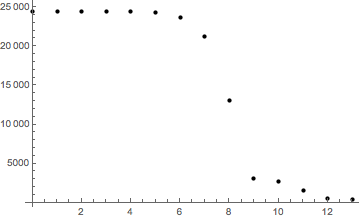}
\hskip .05in
\includegraphics[scale=.45]{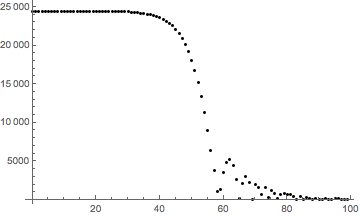}
\caption{$\rho = \rho_1, u = 1$,
filtered grid $(50.001,50.002,\ldots,100)$: 
rows 1, 2: logarithmically scaled polygons plotted against
filter index 512, 128, 64, and 16.
row 3: polygon lengths in the transformed plane for
filter indices $2^n$, $0\leq n\leq13$ (left), and
$\lfloor1.11^n\rfloor$, $0\leq n\leq99$ (right)
}
\label{seqfig10r1c1fi512.png}
\end{figure}
\clearpage
\section{\sc appendices}
\subsection{Extension of conjecture 3.}
\begin{proposal}\label{prop:curve}
For each nontrivial zero $\rho$ and each zeta fixed point $\psi$,
there is a continuous function $f^{cut}:\mathbb C^{cut}\to\mathbb C$
such that, for every admissible $(u,\rho)$, its restriction to
$\vec R_u$ has all the properties in Conjecture \ref{conj:curve},
with $\psi_\rho$ replaced by $\psi$.
Write its image as $S_{u,\psi,\rho}$ and its values as
$\phi_n=f(x_nu)$, forming $\vec\phi_{u,\psi,\rho}$.
The dependence of these values on $X$ is suppressed.
This remains a proposal; the numerical evidence for $\psi_\rho$ does
not establish it for arbitrary $\psi$.
\end{proposal}
\subsection{Extension of conjecture 4.}
\begin{proposal}\label{prop:inverse}
For each $X,u$ there is a many-to-one function $G$ on
$\mathbb C$, independent of $\psi$ and $\rho$, such that, for every
admissible $(u,\rho)$ and every curve and sequence of values postulated
in Proposal \ref{prop:curve},
\[
 G(\phi_n)=\phi_{n-1}\ (n\geq1),\qquad
 G(\rho)=\rho,\qquad |G'(\rho)|>1.
\]
Here the complex derivative $G'(\rho)$ is required to exist.
For each such $(\psi,\rho)$ there is an open connected set
$U_{\psi,\rho}$ containing $S_{u,\psi,\rho}\cup\{\rho\}$ and a
map $F_{\psi,\rho}:U_{\psi,\rho}\to U_{\psi,\rho}$ with
\[
 G\circ F_{\psi,\rho}=\operatorname{id}_{U_{\psi,\rho}},\qquad
 F_{\psi,\rho}(\rho)=\rho,\qquad
 F_{\psi,\rho}(\phi_n)=\phi_{n+1},
\]
\[
 F_{\psi,\rho}(S_{u,\psi,\rho})\subset S_{u,\psi,\rho}.
\]
Then $F_{\psi,\rho}$ is injective, and its inverse on its image is
the restriction of $G$. Existence of these domains and inverse maps
along the whole curves is part of the proposal.
\end{proposal}
\newpage
\subsection{Two basins of attraction.}
\begin{figure}[!htbp]
\centering
\includegraphics[scale=.45]{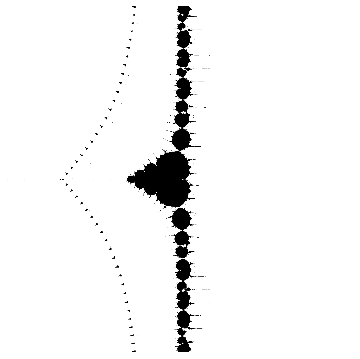}
\hskip .05in
\includegraphics[scale=.45]{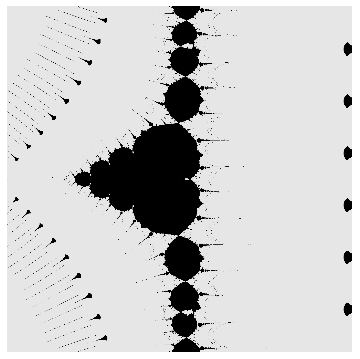}
\vskip .07in
\caption{left: $A_{\phi}$;
right: $\bf{C} - A_{\infty}$
}
\label{leaf1point1.png}
\end{figure}
\subsection{Figure 3.2 of \texorpdfstring{\cite{Br2}}{TEXT}.}
This figure is reproduced below as 
Figure \ref{leaf3point2left.png}.
\begin{figure}[!htbp]
\centering
\includegraphics[scale=.45]{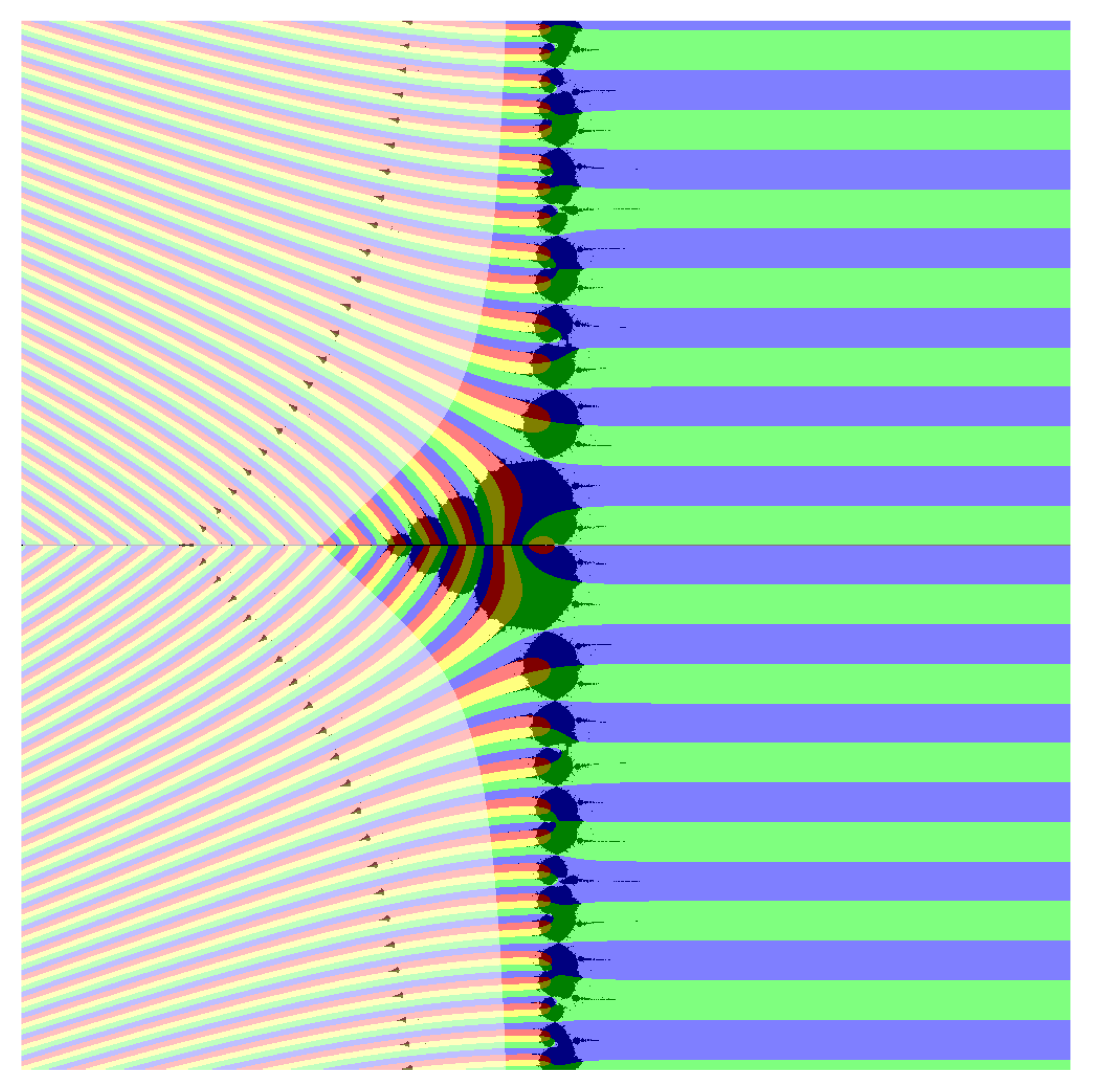}
\hskip .05in
\includegraphics[scale=.45]{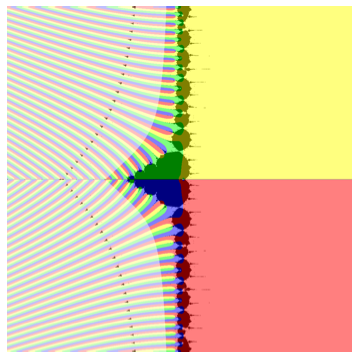}
\vskip .07in
\caption{left: zeros of zeta; right: zeros of
$s \mapsto \zeta(s) - s$ (\it i.e.\rm, zeta
fixed points);
both superposed on a plot 
of $A_{\phi}$
}
\label{leaf3point2left.png}
\end{figure}
\clearpage
\section{Disclosure of AI assistance in this draft}
\label{sec:ai-disclosure}
OpenAI's ChatGPT was used in preparing this revision for substantive
mathematical review, reformulation, proof drafting, numerical checking,
and LaTeX editing. The author supplied the original manuscript,
bibliography, figures, Mathematica notebooks, and numerical data.
The assistance extended beyond proofreading. The following account lists
all fifteen categories of changes made to the original manuscript.
Some changes repair errors while preserving the intended assertion;
others propose different definitions, stronger hypotheses, or weaker
conjectures. Their inclusion in this draft does not establish that they
are the only possible or preferred revisions.

\begin{enumerate}
\item \textbf{Qualification of the abstract and introduction.}
The original broad statement that fixed points converge to Riemann zeros
on nearly logarithmic spirals when the parameters tend to infinity was
replaced by a statement about selected fixed points along specified rays.
The revised wording distinguishes numerical observations and conjectures
from the conditional limit theorem. It also mentions sufficient
hypotheses for convergence of refinement diagnostics. This changes the
stated scope and certainty of the claims.

\item \textbf{Precision of the discrete nearly logarithmic definition.}
The incompatible requirements $K>h$ and $h\to\infty$ with fixed $K$
were replaced by a fixed window length $K\geq1$, independent of $h$.
Points equal to the limiting center were excluded. The fitted
coefficients were defined as exact least-squares minimizers, with the
nondegeneracy condition needed for uniqueness. The denominator in the
relative error was changed from $\log r_n$ to $|\log r_n|$, because
$\log r_n$ is negative near the center. Exponential decay was stated
explicitly as $d_{h,K}\leq C_Kq_K^h$, with $0<q_K<1$, eventually in $h$
for each fixed $K$. These changes separate the mathematical definition
from its finite-precision implementation.

\item \textbf{Reformulation of the geometric spiral definition.}
The distinction between the minimal increasing discrete argument and a
continuous argument along a curve was made explicit. The discrete
convention can omit complete turns and can represent clockwise motion
by positive increments. The replacement geometric definition uses a
continuous argument, permits either direction of rotation, and specifies
injectivity, a strictly decreasing radius throughout, and strictly
monotone angular behavior on a tail.
Admissible inward samples use the inherited continuous angle and satisfy
$0<a\leq|\Theta(t_{n+1})-\Theta(t_n)|\leq b<2\pi$ on a tail.
An exactly logarithmic spiral then has zero fitting residuals for these
samples. This is a substantive replacement definition, not a claim of
equivalence with the original wording.

\item \textbf{Replacement proof of Theorem 1.}
The original proof incorrectly used $\phi_n^*$ in place of
$\zeta(\phi_n^*)$ in the fixed-point equation. ChatGPT drafted a new
proof from
\[
 |\zeta(\phi_n)|=|\phi_n|e^{-x_n\Re(u\phi_n)}.
\]
Convergence of $\phi_n$ and the already stated hypothesis
$\Re(u\lambda)>0$ make the right-hand side tend to zero. The proof
excludes the pole $\lambda=1$ and then uses continuity of $\zeta$.
The conclusion was also clarified: the limit is a zero of $\zeta$;
it is nontrivial when it lies in the critical strip. The theorem's
positive-real-part hypothesis was retained, not newly introduced.

\item \textbf{Restrictions on the rays.}
The original restrictions were retained and supplemented by
$\Re(u\rho)>0$ and $\Im(u\rho)\ne0$. An explicit example was added
showing that the original sign restriction permits $\Re(u\rho)<0$,
which is incompatible with convergence of the stated fixed points to
$\rho\ne0$. This argument rules out negativity but does not settle the
boundary case $\Re(u\rho)=0$; strict positivity is an additional
restriction of the revised formulation. The nonzero-imaginary-part
condition is likewise a choice for the spiral conjectures, not a proved
necessary condition for all possible formulations. The original sign
restriction is described as the range considered experimentally rather
than asserted to be necessary.

\item \textbf{Weakening of Conjecture 2 and clarification of Conjecture 1.}
Conjecture 2 originally asserted the geometric properties for any
selection of fixed points converging to $\rho$. It now asserts the
existence of a suitably selected sequence having convergence and both
geometric properties. This is a weaker mathematical claim, chosen
because the experiments do not establish the assertion for arbitrary
selections; no counterexample to the original stronger conjecture was
proved in this revision. In Conjecture 1, the competing fixed point was
explicitly required to differ from $\psi_\rho$.

\item \textbf{Curve values and domains of inverse maps and compositions.}
Conjecture 3 now specifies the values as $\phi_n=f(x_nu)$, independence
of the curve from the progression $X$, and the distinction between
a curve on one ray and a simultaneous continuous extension across rays.
The notation $\phi_n$ was already present in the author's manuscript;
the explicit formula $\phi_n=f(x_nu)$ was introduced during AI-assisted
revision. At the author's request, the word ``samples'' introduced for
these values has been replaced by ``values'' in the conjecture and its
related discussion.

In Conjecture 4, the ambiguous inverse notation for a many-to-one $G$
was replaced by a map $F_\rho:U_\rho\to U_\rho$, where $U_\rho$ is
open and connected and contains the whole spiral and its limit.
The identities $G\circ F_\rho=\operatorname{id}$ on $U_\rho$ and
$F_\rho\circ G=\operatorname{id}$ on $F_\rho(U_\rho)$ were specified.
An earlier AI-assisted revision required this inverse map to be
holomorphic and asserted its multiplier and attraction at $\rho$.
The present revision removes the descriptors ``meromorphic'' for $G$
and ``holomorphic'' for $F_\rho$, and removes the multiplier and
attraction deductions and the appeal to local analytic inversion for
these conjectural maps. Existence of the complex derivative $G'(\rho)$
with $|G'(\rho)|>1$ is still required explicitly. Existence of the
inverse map along the whole spiral remains conjectural.
Corresponding changes and a corrected cross-reference were made in the
two appendix proposals. The identity in Conjecture 5 was restricted to
the common domain where its compositions are defined.

\item \textbf{Implications involving local dynamics.}
Remark 3 was rewritten to make its convergence assertion depend
explicitly on Conjecture 3 and the identities advancing the values,
rather than on the multiplier of $G$ alone. The valid local assertion
about zeta itself is retained: a repelling fixed point $\psi$ has a
local inverse fixing $\psi$ with derivative $1/\zeta'(\psi)$, and this
inverse attracts all points in a sufficiently small neighborhood.
The discussion of Hua and Yang's local attracting-point theorem was
also corrected: this local result does not automatically give a
backward orbit beginning at a prescribed $\rho$. A sufficient extra
condition was supplied, namely a finite backward chain from $\rho$
reaching the neighborhood on which the attracting inverse branch is
available. Local inverse iterates can then be appended to that chain.
This retained theoretical explanation concerns zeta, not the
conjectural maps $G$ and $F_\rho$.

\item \textbf{Four new conditional lemmas and their proofs.}
ChatGPT formulated and drafted the proofs of Lemmas
\ref{lem:extension}--\ref{lem:length}, using standard mathematical
arguments tailored to this manuscript. Lemma \ref{lem:extension}
requires compatible fixed-point values on a dense set and uniform
continuity on each compact interval; it obtains a continuous extension
and verifies that the fixed-point equation survives, including exclusion
of the pole. It does not establish injectivity, spiral geometry, a
limit at infinity, or extension across rays.

Lemma \ref{lem:coefficients} requires summability of the successive
changes of the two fitted coefficients. Telescoping proves convergence,
and a geometric bound $Cq^j$ gives the tail estimate
$Cq^j/(1-q)$. Lemma \ref{lem:arc} requires an existing continuous curve,
a common continuous angle, convergent coefficients, and uniformly
vanishing absolute residuals on nested partitions with mesh tending to
zero. These hypotheses give an exact logarithmic relation, and, with
nonzero slope and strictly monotone angle, a logarithmic spiral arc.
Lemma \ref{lem:length} assumes an existing continuous rectifiable curve
and nested partitions containing both endpoints, with mesh tending to
zero; the polygonal lengths then increase to the curve's length.

These lemmas give conditional repairs to the refinement arguments. Their
stronger hypotheses have not been established for the curves conjectured
in this article. In particular, the plotted endpoint relative errors do
not supply the uniform absolute-residual hypothesis of
Lemma \ref{lem:arc}.

\item \textbf{Interpretation of the refinement experiments.}
The assertion that successive integer filter indices produce nested
grids was corrected: selecting every $q$-th and every $(q+1)$-st point
does not generally do so; divisibility supplies the relevant nesting.
Coarsening a fixed finite grid cannot give an infinite sequence of
meshes tending to zero. Recomputing the angle separately after filtering
can discard turns. Small successive coefficient changes alone do not
prove the Cauchy property, and coefficient convergence alone does not
construct a limiting curve or prove that it is exactly logarithmic.
The discussion was rewritten to distinguish these conclusions from the
finite observations.

\item \textbf{Displayed coordinates, relative errors, and lengths.}
The impossible signed-radius equation
$|p-\rho|=\log|\phi-\rho|$ was replaced by the coordinate transformation
actually specified in the notebook,
\[
 T_\rho(\phi)=\log|\phi-\rho|\,
                  \frac{\rho-\phi}{|\rho-\phi|}.
\]
Its center in the displayed coordinates is the origin, and its modulus
is $|\log|\phi-\rho||$. The direction reversal and the outward
appearance of inward-moving points were explained. The previously
undefined $h_n$ was identified from the code as the relative log-radius
residual at the last point of a 51-point fitted window, $d_{n,50}$.
The notebook's angle based on $\rho-\phi_n$ was distinguished from the
angle based on $\phi_n-\rho$. Figure 10 was identified as measuring
polygonal lengths in the transformed plane; its filtering need not
retain a common terminal endpoint. These descriptions were derived
from the author's existing code.

\item \textbf{Captions, indices, and the revolution estimate.}
In Figures 5 and 6, the computed index range was corrected to
$1\leq n\leq300$, the notebook angle was denoted by $\alpha_n$, and
the difference-panel index was corrected to $\Delta_{n-1}$.
Erroneous references to $\rho_1$ were replaced by $\rho_3$ in Figure 6
and $\rho_{70}$ in Figure 8. Figure 9 now states the grid beginning at
$50.001$, the filter indices, and the quantities in its difference
panels. For Figure 10, ``fourth row'' was changed to ``third row,''
$1.1^n$ to $1.11^n$, and the corresponding exponent range to
$0\leq n\leq99$.

An earlier AI-assisted revision replaced the author's estimate of
about 1187 steps per revolution near $\rho_{70}$ by 1250, using only
the printed approximate increment $0.0016\pi$. That replacement is
withdrawn: an imprecisely printed increment does not justify replacing
the original revolution estimate. The present revision restores about
1187 steps and states its corresponding approximate increment
$2\pi/1187$. This restores the author's estimate; it does not claim a
new reproduction of the longer numerical run. The distinction between
displayed discrete turns and turns of a continuous interpolating curve
is retained.

\item \textbf{Numerical-methods documentation and additional checks.}
A subsection was added specifying the root-search starting values,
requested precision settings, parameter grids, data dependencies, and
fitting procedures. After the data were supplied, their record counts
and ordering were checked. Code written during this assistance
reconstructed the Figure 9 fitted coefficients and selected Figure 10
polygonal lengths. Differences $\rho_1-\phi$ were computed with
180-decimal-digit arithmetic, followed by double-precision diagnostic
calculations. The filter-index-1 slope and the polygon length for all
50,000 vertices were inserted as numerical examples.
The original root searches were not rerun, the roots were not certified,
and the full set of experimental claims was not independently reproduced.
Requested fitting precision was distinguished from the actual accuracy
of the underlying root data.

\item \textbf{The Riemann-hypothesis question and basin assertions.}
The question was explicitly restricted to $u=1$, with its sequences
quantified. An earlier AI-assisted revision added a paragraph about
possible implications for the Riemann hypothesis, distinguishing
convergence of real parts from finite complex convergence. At the
author's request, that entire paragraph immediately following Question 1
has now been removed. In the basin discussion, all iterates were
required to be defined. The unsupported
assertion about membership of zeta zeros in the basins was replaced by
the appropriate observation about other finite periodic orbits.
The unqualified assertion that the sets are fractals was replaced by a
description of the illustrated boundaries.

\newpage
\item \textbf{Editorial changes, LaTeX maintenance, and preserved material.}
The unused \texttt{ulem} package was removed, \texttt{amsmath} was added,
and package ordering was changed to correct duplicate hyperlink
destinations. Grammar and notation were corrected, a redundant repetition
of Conjecture 3 was removed, appendix headings were shortened, and page
placement was adjusted. The original title, author line, bibliography,
and PNG image contents were preserved. The figures were not regenerated
or replaced.
\end{enumerate}

Completion of these edits does not certify that the manuscript is free
of errors or prove its original conjectures. The replacement definitions,
reformulated conjectures, additional hypotheses, and drafted proofs
require the author's mathematical judgment and verification. The author
remains responsible for the claims and wording retained in the manuscript.

\clearpage
\bibliography{bibtexcite}
\href{mailto:barrybrent@member.ams.org}{barrybrent@member.ams.org}
\end{document}